\subjclass[2010]{Primary 57M27; Secondary 57M25}
\newcommand{\ep}{{\varepsilon}}
\newcommand{\Ll}{{\wedge}}
\newcommand{\Cc}{{\mathbb{C}}}
\newcommand{\Zz}{{\mathbb{Z}}}
\newcommand{\Qq}{{\mathbb{Q}}}
\theoremstyle{plain}
\newtheorem{thm}{Theorem}[section]
\newtheorem{lem}[thm]{Lemma}
\theoremstyle{definition}
\newtheorem{defn}{Definition}
\newtheorem{remark}{Remark}
\begin{document}

\title{Alexander representation of tangles}

\author{Stephen Bigelow}
\address{Department of Mathematics\\University of California at Santa Barbara\\
Santa Barbara, CA 93106\\USA}
\email{bigelow@math.ucsb.edu}
\urladdr{http://www.math.ucsb.edu/~bigelow/}

\author{Alessia Cattabriga}
\address{Dipartimento di Matematica\\Piazza di Porta S. Donato 5, 40126\\Bologna, Italia}
\email{alessia.cattabriga@unibo.it}
\urladdr{http://www.dm.unibo.it/~cattabri/}

\author{Vincent Florens}
\address{Laboratoire de Math\'{e}matiques et de leurs Applications\\Pau UMR CNRS 5142\\France}
\email{vincent.florens@univ-pau.fr}
\urladdr{http://riemann.unizar.es/geotop/pub/vincent/}

\begin{abstract}
A tangle is an oriented $1$-submanifold of the cylinder
whose endpoints lie on the two disks in the boundary of the cylinder.
Using an algebraic tool developed by Lescop, we extend the Burau representation
 of braids to a functor from the category of oriented tangles to the category of $\Zz[t,t^{-1}]$-modules.
For $(1,1)$-tangles (i.e., tangles with one endpoint on each disk)
this invariant coincides with the Alexander polynomial of the link
obtained by taking the closure of the tangle. 
We use the notion of plat position of a tangle
to give a constructive proof of invariance in this case.
\end{abstract}

\maketitle

\section{Introduction}

The Burau representation was the first non trivial representation of
the braid groups. 
It was defined by writing an explicit matrix in $GL_n(\Zz[t,t^{-1}])$ for every generator of $B_n$,
and verifying that these matrices satisfy the braid relations.
Later, it was reinterpreted with the point of view of mapping class
group and action on {\em Burau modules}, which are
homology modules of the punctured disk with twisted coefficients in $\Zz[t,t^{-1}]$, as a particular case of the Magnus representation \cite{birman}.
This representation was also the first endowed with a unitary structure, and extensively studied for its (non)-faithfulness.

Several extensions of the Burau representation have been considered.
The first was due to Le Dimet \cite{ledimet} for string-links, and studied
in more detail by Kirk and Livingston \cite{klw}. 
An extension for the tangle category was defined by
Cimasoni and Turaev \cite{cimtur1,cimtur2};
it takes value in the Lagrangian category,
which contains that of $\Zz[t,t^{-1}]$-modules. 
 
An important aspect of braid group representations
is their connection with polynomial invariants of knots.
One reconstructs the Alexander polynomial of a closed braid
from the Burau representation as $\det ( id - \rho)$.
The other works on the extension of the representation
to string links or tangles
contain some relations with the Alexander polynomial,
but no direct formula.
 
In this paper we use an algebraic tool introduced by Lescop for link
complements to construct an isotopy invariant for oriented tangles. 
This invariant is related to the Alexander module of the tangle
exterior in the cylinder. This approach is
functorial: the invariant provides a
functor from the category of oriented tangles to the category of
$\Zz[t,t^{-1}]$-modules and homomorphisms up to multiplication by a
unit and for $(1,1)$-tangles, it coincides with the Alexander polynomial of the
closure of the tangle. Our construction is a particular case of a more general construction 
developed in \cite{FM}. 

Section 2 is devoted to the construction of the invariant.
In Section 3 we compute
the value of the invariant for the generators of the category. 
In Section 4  the invariant is computed explicitly
when the tangle is written as the plat closure of an oriented braid. This illustrates the similarities with the
approach of Lawrence \cite{law} for the Jones polynomial and with the
 model of Bigelow for a link presented as a plat closure. 
In the last Section we prove the functoriality of the invariant.

It should be noted that the homology of symmetric spaces of the
punctured disk (or configuration spaces), with some twisted
coefficients coincides with the exterior algebra of the Burau modules.
The modules can also be endowed with a Hermitian structure coming from
the exterior power of the twisted intersection form.
In particular, for $(1,1)$-tangles, the calculation could be interpreted as an
intersection (with twisted coefficients) of a set of curves in the
$2$-disk.
  
It is worth mentioning that the model of Bigelow for the Jones
polynomial inspired some construction of categorification. Our
approach here should similarly give rise to a
categorification of the Alexander polynomial --- presumably some form
of Heegaard Floer homology.
 
\section{The main construction}
\label{definition}

\subsection{The Alexander functions of a module}
\label{functions}

Let $R= \Zz[t,t^{-1}]$ be the Laurent polynomial ring. Let $H$ be an $R$-module of finite type. 
Consider a free resolution of $H$ of the form 
\begin{eqnarray}
\label{prese}
R^{p}\stackrel{A} \longrightarrow R^{p+k} \longrightarrow H \longrightarrow 0.
\end{eqnarray}
In other words, (\ref{prese}) determines  a $k$-deficiency presentation of $H$ over $R$ with $(p+k)$ generators $\gamma_1,\dots,\gamma_{p+k}$ and $p$ relators
$\rho_1,\dots,\rho_p$. Given such a presentation, it is possible to
define an $R$-linear map  $\Ll^k H \longrightarrow R$ as follows (see \cite[\S 3.1]{le}).

Let $\hat{\rho}=\rho_1\wedge\dots\wedge\rho_{p}$ 
and $\hat{\gamma}=\gamma_1 \wedge \dots \wedge \gamma_{p+k}$. The
{\em Alexander function of $H$ relative to $k$} is the $R$-linear map
$\varphi = \varphi(H,k) \colon \Ll^k H \to R$
defined by
\begin{eqnarray} 
\label{phi}u \wedge \hat{\rho} = \varphi(u) \cdot \hat{\gamma}
\end{eqnarray}
for each $u\in\Ll^k H$.  
For fixed $k$,
different $k$-deficient presentations will give rise to Alexander functions that differ
only by multiplication by a unit in $R$.

Note that if $\hat{u}= u_1 \wedge \dots \wedge u_k$ and $A\in\mathbf{M}_{p\times(p+k)}(R)$ 
is the matrix associated to the presentation (\ref{prese}),  then $\varphi(\hat{u})$ coincides with the determinant of the matrix constructed adding to  $A$  the $k$ columns $u_1,\dots,u_k$ 
expressed in terms of the generators $\gamma_i$.

\begin{remark} \label{rem} 
From (\ref{prese}), by tensoring  with
$\Qq(t)$, we obtain  a presentation of deficiency $k$ for 
$H \otimes \Qq(t)$. 
So, if 
$k < \mathop{\mathrm{rank}} H = \dim_{\Qq(t)} H \otimes \Qq(t)$,
then $\varphi$ is the zero map. 
Moreover, if $H$ is free of rank $k$, then $\varphi$ coincides with the volume form
$$ \Ll^k H \stackrel{\cong} \longrightarrow R $$
induced by the choice of $\hat{\gamma}$ as a basis of $\Ll^k H$,
i.e., $\varphi(\gamma_1 \wedge \dots \wedge \gamma_k)=1$.
\end{remark}

\subsection{The Alexander  invariant of a tangle}

Let $D$ be the closed unitary disk in $\Cc$. Consider two copies of
the disk $D$ with fixed  ordered finite sequences of points
$p_0,\dots,p_{k_-}$ (respectively $p_0,\dots, p_{k_+}$) on the real line.
Let $\varepsilon^-$ and $\varepsilon^+$ be sequences of signs $\pm 1$
of length $k_-+1$ and $k_++1$, respectively.
An {\em $(\ep^-,\ep^+)$-tangle} is an oriented $1$-submanifold $\tau$
of $D \times I$ whose oriented boundary $\partial\tau$ is
\begin{eqnarray}
\sum_{i=0}^{k_+}\ep_i^+(p_i,1) - \sum_{i=0}^{k_-}\ep_i^-(p_i,0).
\end{eqnarray}
Note that if such a tangle exists then
$\sum_{i=1}^{k_+}\ep_i^+=\sum_{i=1}^{k_-}\ep_i^-$. 

Let $D_-$ and $D_+$ be the punctured disks
$(D \times \{ 0 \}) \setminus \tau$
and $(D \times \{ 1 \}) \setminus \tau$, respectively. Any sequences of signs $\ep^\pm$
determine epimorphisms 
$\chi_{\pm} \colon \pi_1(D_\pm,*) \rightarrow \Zz=\langle t\rangle$ 
sending  a  simple loop $x_i$ turning once
around $p_i$ in the  counterclockwise direction to $t^{\epsilon^\pm_i}$.
Let $H_{\pm}$ be the $R$-module $H_1^{\chi_\pm}(D_\pm; R)$, where the
coefficients are twisted by $\chi_\pm$.
Equivalently, $H_{\pm}$ is the first
homology of the infinite cyclic covering induced by
$\chi_{\pm}$.

\begin{remark} \label{basis}
The disk $D_\pm$ deformation retracts to the wedge of circles $x_0 \dots x_{k_\pm}$.
So the $R$-module $H_\pm$ is  freely  generated by
$u_i^\pm=\tilde{x_i}-\tilde{x}_{i+1}$,  for $i=0,\dots, k_\pm$,
where $\tilde{x_i}$ is a  lift of $x_i$.
More precisely,
$u_i^\pm$ will be a loop around both $p_i$ and $p_{i+1}$ if these punctures have opposite signs,
or a figure eight around them if they have the same sign.
\end{remark}

Let $B_\tau$ be the exterior of $\tau$ in $D \times I$. For each
connected component $\tau_j$ of $\tau$,  let   $m_j$ be a meridian
around $\tau_j$  oriented so that its linking number with $\tau_j$ is
one.

Since  $H_1(B_\tau)=\oplus_{j=1}^{n}\Zz m_j$ (see \cite[\S 3.3]{cimtur1}),
the composition of the Hurewicz homomorphism with the
homomorphism $H_1(B_\tau)\to\Zz \langle t\rangle$ sending $m_i$ to
$t$, gives an epimorphism  $\chi \colon \pi_1(B_\tau)\rightarrow \Zz$
extending both  $\chi_+$ and $\chi_-$.  We set 
$H= H_1^\chi(B_\tau; R)$ and denote with $i_\pm$  the maps from $H_\pm$ to $H$, induced by the inclusion.

Let $\delta k =\frac{k_+ - k_-}{2}$.  Our aim is to construct linear maps
$\rho_{\tau,\star}$, of degree $\delta k$, between the $R$-modules 
$\Ll^{\star} H_-$  and $\Ll^{\star} H_+$.

For any generator $w_+$ of $\Ll^{k_+}H_+ \simeq R w_+$, and $r$ with $0 \leq r \leq k_+$, consider the isomorphism, 
$$ d_r : \Ll^{r}H_+ \longrightarrow \text{Hom}(\Ll^{k_+-r}H_+; R),$$
defined by the formula
$ x \wedge y = \big( d_r(x) (y) \big) w_+$, for any $x\in\Ll^rH_+$ and $y\in\Ll^{k_+-r}H_+$.
 
Let $k = \frac{k_-+k_+}{2}$. We will show in Section \ref{plat} that there exists a free deficiency $k$ resolution for $H$ (see Lemma \ref{heeg}).
For any $i$ with $0 \leq i \leq k$, the choice of generators $\hat{\rho}$ for $\Ll^p R^p$ and $
\hat{\gamma}$ for $\Ll^{k+p} R^{k+p}$ similarly induces 
isomorphisms
$$ \varphi_i: \Ll^i H \longrightarrow \text{Hom} ( \Ll^{k-i} H; R),$$
defined by the formula $x \wedge y \wedge \hat{\rho} = \big( \varphi_i(x)(y) \big) \hat{\gamma}$
in $\Ll^{k+p} R^{k+p}$, for any $x\in\Ll^i H$ and $y\in\Ll^{k-i} H$ (see Section \ref{functions}).
  
\begin{defn}
For all $ i \in \{0,\dots, k \}$, let the homomorphism 
$\rho_{\tau,i}:  \Ll^iH_- \rightarrow \Ll^{i + \delta k} H_+$ be defined by the following composition
$$ \Ll^iH_- \stackrel{\Ll^i i_-} \longrightarrow \Ll^i H \stackrel{\varphi_i} \longrightarrow \text{Hom}( \Ll^{k-i} H; R)
\stackrel{ (\Ll^{k-i} i_+)^*} \longrightarrow \text{Hom}(\Ll^{k-i} H_+; R) \stackrel{d_{i+ \delta k}^{-1}} 
\longrightarrow \Ll^{i + \delta k}  H_+. $$
The map $\rho_\tau = \displaystyle{\oplus_{i} \rho_{\tau,i}}$ is an isotopy invariant of $\tau$, defined up to a global multiplication by a unit of $R$. 
We call it the {\em Alexander invariant} of $\tau$.
\end{defn}
 
 The special case of $(1,1)$-tangles, corresponding to $k_-=k_+=0$, illustrates the relation with the Alexander polynomial.
 
\begin{thm}
Let $\tau$ be a $(1,1)$-tangle and let $\hat{\tau}$ be the link
obtained by closing up together the two free strands of $\tau$. Then
$\rho_{\tau}$ is equal to $m_{\Delta(\hat{\tau})}$ where
$\Delta(\hat{\tau})$ is the  Alexander polynomial of  $\hat{\tau}$
and $m_u \colon \Ll^k H_- \to \Ll^k H_+$
denotes the multiplication by $u$.
\end{thm}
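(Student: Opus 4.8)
The plan is to reduce the statement, by unwinding the definition of $\rho_\tau$ in this degenerate case, to an identity about the order of the module $H$, and then to prove that identity by comparing the tangle exterior $B_\tau$ with the exterior of $\hat\tau$ in $S^3$. Throughout write $a\doteq b$ for equality of elements of $R$ up to multiplication by a unit. Since $\tau$ is a $(1,1)$-tangle, $k_-=k_+=0$, hence $k=0$ and $\delta k=0$, so $\rho_\tau=\rho_{\tau,0}$; every module occurring in its defining composition is $\Ll^0(-)=R$, and the maps $\Ll^0 i_-$, $(\Ll^0 i_+)^{*}$ and $d_0^{-1}$ are the canonical identifications with $R$. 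The only substantive term is $\varphi_0$. By Lemma~\ref{heeg} applied with $k=0$, $H$ admits a free resolution $R^{p}\xrightarrow{A}R^{p}\to H\to 0$ with $A$ a square matrix, and by the description of $\varphi$ following \eqref{phi} we get $\varphi_0(1)(1)=\det A$; as recalled there, this element is independent of the resolution up to a unit, so we may call it $\mathrm{ord}(H)$. Hence $\rho_\tau=m_{\mathrm{ord}(H)}$, and the theorem becomes the assertion $\mathrm{ord}(H)\doteq\Delta(\hat\tau)$.

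For the latter I would argue homologically. Put $\hat\tau$ in $S^3$ using the genus-one Heegaard splitting $S^3=(D\times S^1)\cup_{T^2}(S^1\times D)$: the tangle $\tau\subset D\times I$ closes up to $\hat\tau\subset D\times S^1$ by identifying $D\times\{0\}$ with $D\times\{1\}$, and then $E(\hat\tau)=B'_\tau\cup_{T^2}(S^1\times D)$, where $B'_\tau=(D\times S^1)\setminus N(\hat\tau)$ is obtained from $B_\tau$ by gluing $D_-$ to $D_+$. One first checks that $\chi$ is compatible with all of this: it already extends $\chi_\pm$, and on $T^2$ it sends the meridian $\partial D\times\{p\}$ to $t$ and the longitude $\lambda=\{q\}\times S^1$, $q\in\partial D$, to $t^{\mathrm{lk}(\lambda,\hat\tau)}=t^{0}=1$, so every module below carries a well-defined $R$-structure. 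Then run two Mayer--Vietoris sequences with these twisted coefficients. Since $D_\pm\simeq S^1$ and $\chi_\pm$ is onto, $H_1^{\chi_\pm}(D_\pm;R)=0$ and $H_0^{\chi_\pm}(D_\pm;R)=R/(t-1)$, so the gluing producing $B'_\tau$ gives the exact sequence
\[ 0\to H\to H_1^\chi(B'_\tau;R)\to R/(t-1)\to 0 . \]
The curve $\lambda$ meets the identified copy of $D_-$ transversely in one point, away from the puncture, so $[\lambda]$ maps onto a generator of $R/(t-1)$; and $[\lambda]$ lies in the image of $H_1^\chi(T^2;R)\cong R/(t-1)$, hence $(t-1)[\lambda]=0$. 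Thus the sequence splits, with $H_1^\chi(B'_\tau;R)\cong H\oplus R[\lambda]$ and $R[\lambda]\cong R/(t-1)$. In the second Mayer--Vietoris sequence $H_1^\chi(S^1\times D;R)=0$, so $H_1^\chi(E(\hat\tau);R)$ is the cokernel of $H_1^\chi(T^2;R)\to H_1^\chi(B'_\tau;R)$, that is, of $R[\lambda]\hookrightarrow H_1^\chi(B'_\tau;R)$. Combining the two, $H_1^\chi(E(\hat\tau);R)\cong H$. As $H_1^\chi(E(\hat\tau);R)$ is by definition the first homology of the infinite cyclic cover of the exterior of $\hat\tau$, its order is $\Delta(\hat\tau)$ (the classical Alexander polynomial when $\hat\tau$ is a knot), and we conclude $\mathrm{ord}(H)\doteq\Delta(\hat\tau)$.

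An alternative, more hands-on route --- closer in spirit to the title of Section~\ref{plat} --- is to write $\tau$ as the plat closure of an oriented braid, use the explicit deficiency-$0$ presentation of $H$ built there from the Burau-type matrices of the braid generators, and recognize $\det A$ directly as $\Delta(\hat\tau)$ via the classical determinant formula for the Alexander polynomial of a plat (or braid) closure.

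The step I expect to be the main obstacle, in either approach, is exactly this final identification. In the homological approach it is the careful bookkeeping of the two Mayer--Vietoris connecting homomorphisms --- in particular verifying that $[\lambda]$ really hits a generator of $R/(t-1)$ and that attaching $S^1\times D$ deletes precisely that $R/(t-1)$ summand without introducing spurious $(t-1)$-torsion. In the plat approach it is matching the presentation matrix $A$ with a standard presentation of the Alexander module of $\hat\tau$, for instance via Fox calculus on a Wirtinger presentation read off from the plat diagram. Everything else is formal.
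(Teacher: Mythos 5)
Your reduction of $\rho_\tau$ to multiplication by the determinant of a square presentation matrix of $H$ is exactly the paper's argument; where you diverge is the last step. The paper simply declares that this determinant ``is by definition'' the Alexander polynomial of $\hat\tau$, implicitly using that for a $(1,1)$-tangle the inclusion $B_\tau\hookrightarrow E(\hat\tau)$ is a homotopy equivalence (the exterior of the trivial closing arc in the complementary ball deformation retracts onto the annulus along which it is glued), so $H$ is literally the Alexander module of $\hat\tau$. You instead prove the needed identification $\mathrm{ord}(H)\doteq\Delta(\hat\tau)$ by passing through $D\times S^1$ and running two Mayer--Vietoris sequences, and your sketch is correct: $H_1^{\chi_\pm}(D_\pm;R)=0$ and $H_0^{\chi_\pm}(D_\pm;R)=R/(t-1)$ give the short exact sequence for $H_1^\chi(B'_\tau;R)$; the class $[\lambda]$ does hit a generator of $R/(t-1)$ (it crosses the identified disk once) and is killed by $t-1$ since it comes from $H_1^\chi(T^2;R)\cong R/(t-1)$, so the sequence splits; and in the second sequence the map $H_0^\chi(T^2;R)\to H_0^\chi(B'_\tau;R)$ is an isomorphism, so the connecting map vanishes and $H_1^\chi(E(\hat\tau);R)\cong H$. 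One small point you assert without checking, and which you should verify because the argument genuinely needs it: $H_1^\chi(S^1\times D;R)=0$ requires $\chi$ to be nontrivial on the core of the complementary solid torus, i.e.\ the total winding number of $\hat\tau$ in $D\times S^1$ must be nonzero; this holds because any closed components of $\tau$ are null-homologous in the ball $D\times I$, so the winding number is exactly $\pm1$ from the closed-up strand. With that noted, your route buys an explicit, checkable proof of the step the paper leaves definitional (and one that adapts to more general closures), at the cost of being longer than the paper's one-line appeal to the homotopy equivalence $B_\tau\simeq E(\hat\tau)$; your alternative ``plat/Burau determinant'' suggestion is closer in spirit to Section~\ref{plat} but would require the Fox-calculus matching you flag, which the homological argument avoids.
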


\begin{proof}
Since  $k_-=k_+=0$, it follows that $H_{\pm}= \{0 \}$ and $\Ll^{0} \{0 \}= R$. Hence $\rho_\tau$
 is a homomorphism $R \rightarrow R$ that is the multiplication by the determinant of a square presentation
  matrix of $H$. By definition, this determinant is the Alexander polynomial of $\hat{\tau}$.
\end{proof}

\section{The tangle category}
\label{tangle}

Let $\mathcal{T}$ denote the oriented tangle category, that is, 
the category whose objects are sequences
$\epsilon=(\epsilon_0,\dots,\epsilon_n)$ of signs $\pm$ attached to
the punctures of a punctured disk, and whose morphisms are oriented
tangles.

Let $\mathcal{M}$ be the
category whose objects are $R$-modules and whose morphisms are classes
of homomorphisms modulo the equivalence relations defined by $f\sim g$
if there exists a unit $u\in R$ such that $f=m_u\circ g$ where $m_u$
denotes the multiplication by u.
There is a functor from $\mathcal{T}$ to $\mathcal{M}$
taking $ \epsilon^-\overset{\tau}{\longrightarrow}\epsilon^+$
to the class of modules homomorphisms 
$H_-(D_{\epsilon^-})\overset{\rho_{\tau}}{\longrightarrow}H_{+}(D_{\epsilon^+})$.
We postpone the  proof of this  statement to last section, and, in the following, 
we compute  $\rho_{\tau}$ on the generators of the tangle category.

\subsection{Braids}
\label{braid}

Let us start with an oriented identity braid $\tau$ with $k+1$ strands.
Thus $H$ is free of rank $k$,
and $\varphi \colon \Ll^k H \rightarrow R$ 
is an isomorphism and coincides with the volume form
induced by the choice of a basis. Moreover, $i_-$ and $i_+$ are
isomorphisms, and the choice of $\hbox{det}_+$ induces
a canonical choice of  $\varphi$, such that the following diagram
commutes.
   
$$\xymatrix{
  \Ll^k H_+ \ar[rr]^\cong_{i_+} \ar[rd]_{\det_+}
  && \Ll^k H \ar[ld]^\varphi
  \\ & R
}$$

Let $u_-$ be in $\Ll^i H_-$. From the diagram, for all $w_+ \in
\Ll^{k-i} H_+$, we get
  
$$\textstyle{\det}_+ (i_+^{-1} (i_-(u_-) \wedge i_+(w_+)) = \varphi(i_-(u_-) \wedge i_+(w_+)).$$
  By definition,
$$\varphi(i_-(u_-) \wedge i_+(w_+)) = \textstyle{\det}_+ (\rho_\tau(u_-) \wedge w_+ ).$$
  Since $\hbox{det}_+$ is non singular, we get $$\rho_\tau(u_-) = i_+^{-1} \big( i_- (u_-) \big).$$

Now consider the general case of an oriented braid $\sigma$ in $B_{k+1}$,
i.e., $\sigma$ is the isotopy class of a homeomorphism  of  the
$(k+1)$-punctured disk (that we still denote with $\sigma$). 
The geometric realisation of $\sigma$ as a braid with $(k+1)$-strings
in $D \times I$ can be viewed as the mapping cylinder of $\sigma$.
Let $\{u^{\pm}_1,\dots,u^{\pm}_{k_{\pm}}\}$ be the the basis of
$H_\pm$ described in Remark \ref{basis}.
Since $i_- \colon H_-\to H$ is still an isomorphism  we can choose
$\{i_-(u^-_1),\dots, i_-(u^-_{k_{-}})\}$ as a free basis for $H$.
With respect to these bases,  the matrix associated to $i_+$ is
$b(\sigma)^{-1}$, where
$b(\sigma)$ is the image of $\sigma$ by the oriented Burau representation.
It follows that, with respect to these bases the matrix associated to
$(\rho_{\tau})_i \colon \Ll^i H \to\Ll^i H_+$ is 
$\det(b(\sigma))(\Ll^i(b(\sigma)^{-1}))$. 
Moreover, the matrices of the  oriented  Burau representation of the
Artin generators  $\sigma_1,\dots, \sigma_{k-1}$ of $B_{k+1}$  in the
bases $u_1^\pm, \dots, u_{k_\pm}^\pm$  are 
$$ M_{b(\sigma_1)} = 
\begin{pmatrix}
         -t^{\varepsilon_2} & 1 \\
         0 & 1
\end{pmatrix} \oplus Id_{k-3}, 
\ M_{b(\sigma_{k-1})} = Id_{k-3} \oplus \begin{pmatrix}
         1 & 0 \\
         t^{\varepsilon_k} & -t^{\varepsilon_k}
\end{pmatrix} $$
$$ M_{b(\sigma_i)} = Id_{i-2} \oplus  \begin{pmatrix}
        1 & 0 & 0 \\
         t^{\varepsilon_{i+1}} & -t^{\varepsilon_{i+1}} & 1 \\
         0 & 0 & 1
\end{pmatrix} \oplus Id_{k-i-2}. $$

 \subsection{A cup} 

 Let \emph{cup} be the tangle in Figure \ref{cup}. We have $k_-=k_+-2$,
$k=k_+-1$,  $\delta k=1$ and $H$  free.
Note that $i_-$ is a monomorphism and we can  choose the  bases of
$H_-$ and $H$, such that the matrix
for $i_-$ is $I_{k_-}\oplus 0$. On the other hand, $i_+$ is an
epimorphism  and the kernel of $i_+$ has rank one.
Using Mayer-Vietoris arguments on a small disk around the points
$p_{k+1}$ and $p_{k+2}$ in $D_+$,
we can identify the basis of $H$ with a sub-basis of $H_+$ so that
$i_+$ is simply a projection. 

\begin{figure}[ht]
\begin{center}
\begin{tikzpicture}
\draw (0,50pt) ellipse (80pt and 20pt); 
\draw (0,-50pt) ellipse (80pt and 20pt); 
\draw (-80pt,-50pt) -- (-80pt,50pt);
\draw (80pt,-50pt) -- (80pt,50pt);
\filldraw (-50pt,50pt) circle (2pt)     node[anchor=south]{$-$};
\filldraw (-30pt,50pt) circle (2pt)     node[anchor=south]{$+$};
\filldraw (-10pt,50pt) circle (2pt)     node[anchor=south]{$+$};
\filldraw (50pt,50pt) circle (2pt)      node[anchor=south]{$+$};
\filldraw (-10pt,-50pt) circle (2pt)     node[anchor=north]{$-$};
\filldraw (50pt,-50pt) circle (2pt)      node[anchor=north]{$-$};
\draw [->, rounded corners=8pt] (-50pt,50pt) -- (-50pt,17pt)--(-40pt,17pt);
\draw [      rounded corners=8pt] (-50pt,50pt) -- (-50pt,17pt)--(-30pt,17pt)--(-30pt,50pt);
\draw (0,-50pt) ellipse (80pt and 20pt); 
\draw [->] (-10pt,-50pt) -- (-10pt,0pt); \draw (-10pt,-50pt) -- (-10pt,50pt);
\draw [->] (50pt,-50pt) -- (50pt,0pt); \draw (50pt,-50pt) -- (50pt,50pt);
\draw (20pt,0) node{$\dots$};
\end{tikzpicture}
\caption{A cup.}
\label{cup}
\end{center}
\end{figure}

Let $\alpha \in H_+$ be a generator of the kernel of $i_+$. Consider
the {\em contraction along} $\alpha$, that is
$$i_\alpha \colon \hbox{Hom}(\Ll^k H_+, R) \rightarrow
\hbox{Hom}(\Ll^{k_+-1} H_+, R)$$ 
such that
$i_\alpha (\det_+)(v_+) = \det_+(v_+ \wedge \alpha)$,
for any $v_+ \in  \Ll^{k_+-1} H_+$.
In other words, the following diagram commutes. 
$$\xymatrix{
\Ll^{k_+-1} H_+  \ar[rr]^{\wedge \alpha} \ar[rd]_{ i_\alpha (\det_+) }
&& \Ll^{k_+} H_+ \ar[ld]^{\det_+}
\\ & R
}$$

Since $\alpha \in \ker i_+$, one has the following.
$$\xymatrix{
\Ll^{k_+-1} H_+  \ar[rr]^{i_+} \ar[rd]_{i_\alpha (\det_+) }
&& \Ll^{k_+-1} H \ar[ld]^{\varphi}
\\ & R
}$$
   
Let $u_-$ be in $\Ll^i H_-$, for $i= 1, \dots, k$.  By
definition, for all $w_+ \in \Ll^{k-i}H_+$, 
$$ \textstyle{\det}_+ (\rho_\tau(u_-) \wedge w_+)
 = \varphi(i_-(u_-) \wedge i_+(w_+)).$$
By the previous diagrams,
$$ \varphi(i_-(u_-) \wedge i_+(w_+)) 
    = \hbox{det}_+(i_+^{-1} i_-(u_-) \wedge w_+ \wedge \alpha),$$
where by $i_+^{-1} i_-(u_-)$ we mean any preimage of $i_-(u_-)$, since
two of them differ by addition of a multiple of $\alpha$.
From above, since $\det_+$ is non-singular, we get
$$\rho_\tau(u_-) =(-1)^{k-i} i_+^{-1} i_-(u_-) \wedge \alpha .$$

\subsection{A cap} 

Let {\em cap} be the tangle in Figure \ref{cap}. 
We have $k_-=k_++2$, $k=k_++1$,  $\delta k=-1$ and $H$  free. 
Here $i_+$ is a monomorphism. We write $H = i_+(H_+) \oplus \beta$.
Similarly to the previous section, we can define $i_\beta$ the
contraction along $\beta$ and we have the following commutative
diagrams.
$$
\vcenter{\vbox{
    \xymatrix{
    \Ll^{k_+} H \ar[rr]^{\wedge \beta} \ar[rd]_{i_\beta (\varphi)} 
    && \Ll^{k_+ + 1} H \ar[ld]^{\varphi}
    \\ & R
}
}}
\qquad
\vcenter{\vbox{
    \xymatrix{
    \Ll^{k_+} H_+ \ar[rr]^{i_+} \ar[rd]_{\det_+}
    && \Ll^{k_+} H \ar[ld]^{i_\beta (\varphi) }
    \\ & R
    }
}}$$
\begin{figure}[ht]
\begin{center}
\begin{tikzpicture}
\draw (0,50pt) ellipse (80pt and 20pt); 
\draw (0,-50pt) ellipse (80pt and 20pt); 
\draw (-80pt,-50pt) -- (-80pt,50pt);
\draw (80pt,-50pt) -- (80pt,50pt);
\filldraw (-50pt,-50pt) circle (2pt)     node[anchor=north]{$+$};
\filldraw (-30pt,-50pt) circle (2pt)     node[anchor=north]{$-$};
\filldraw (-10pt,-50pt) circle (2pt)     node[anchor=north]{$+$};
\filldraw (50pt,-50pt) circle (2pt)      node[anchor=north]{$+$};
\filldraw (-10pt,+50pt) circle (2pt)     node[anchor=south]{$+$};
\filldraw (50pt,+50pt) circle (2pt)      node[anchor=south]{$+$};
\draw [->, rounded corners=8pt] (-50pt,-50pt) -- (-50pt,-17pt)--(-40pt,-17pt);
\draw [      rounded corners=8pt] (-50pt,-50pt) -- (-50pt,-17pt)--(-30pt,-17pt)--(-30pt,-50pt);
\draw (0,-50pt) ellipse (80pt and 20pt); 
\draw [->] (-10pt,-50pt) -- (-10pt,0pt); \draw (-10pt,-50pt) -- (-10pt,50pt);
\draw [->] (50pt,-50pt) -- (50pt,0pt); \draw (50pt,-50pt) -- (50pt,50pt);
\draw (20pt,0) node{$\dots$};
\end{tikzpicture}
\caption{A cap.}
\label{cap}
\end{center}
\end{figure} 

Let $u_-$ be in $\Ll^{i} H_-$.
For all $w_+ \in \Ll^{k_+ -i+1} H_+$, one has, using the definition and the commutativity of the diagrams
\begin{eqnarray*}
\varphi(i_-(u_-) \wedge i_+(w_+)) &=& \hbox{det}_+ (\rho_\tau(u_-) \wedge w_+) \\
&=& i_\beta (\varphi) \big(i_+(\rho_\tau(u_-)) \wedge i_+(w_+) \big) \\
&=&\varphi \big(i_+(\rho_\tau(u_-)) \wedge   i_+(w_+)\wedge  \beta \big) .
\end{eqnarray*}
So we get
$$i_-(u_-)=(-1)^{k_{+}-i+1} i_+(\rho_\tau(u_-)) \wedge\beta.$$

It follows that $\rho_\tau(u_-)$ is zero unless $u_-$ is of the form
$u_-^\prime \wedge i_-^{-1}(\beta)$,
in which case  $\rho_\tau(u_-)= (-1)^{k_{+}-i+1}i_+^{-1}i_-(u_-^\prime)$.
Here,
$i_-^{-1}(\beta)$ and
$i_+^{-1}i_-(u_-^\prime)$
denote arbitrary elements of the preimage.

\section{Plat position}
\label{plat}

A system of $n$ disjoint arcs $A_1,\dots, A_n$ properly embedded in
$D\times I$ is called {\em trivial} if they are not linked and are
boundary parallel; more precisely, if there exist $n$ disjoint disks
$D_1,\dots, D_n$ such that $A_i\cap D_i=A_i\subset \partial D_i$,
$\partial D_i \setminus A_i\subset \partial(D\times I)$, and
$A_j\cap D_i=\emptyset$,
for each $i,j=1,\dots,n$ with $i\ne j$.
A {\em trivial tangle of type $(k,n)$} is a tangle whose
component are $k$  vertical strands connecting  $D_+$  with $D_-$ and
a  system of $n$ trivial arcs such that  if there exists an  arc of
$\tau$ connecting two points in $D_+$, then  no arc of $\tau$ connects
two points in $D_-$. 

A {\em Heegaard surface} for a tangle $\tau\subset D\times I$ is
a disk that intersects the tangle transversally and cuts $\tau$ into
two trivial tangles.

To see the existence of a Heegaard surface for a given tangle,
start with a real valued function $f$ on $D\times I$ that is
of Morse type on the  tangle: typically we can imagine $f$ as the height
function on $D\times I$. By perturbing the function, we can
assume that minima occur before maxima; so  if $y=1/2$ is a regular
value separating the minima from the maxima then $f^{-1}(1/2)$ is the
required Heegaard disk.

If we  choose   a standard model $\tau(k,n)$ for a trivial tangle of
type $(k,n)$,  the existence of a Heegaard surface for each tangle can
be rephrased in the following way: each tangle  $(D\times I, \tau)$
can be obtained as
$$(D\times [0,1/2], \tau(k_-,n_-))\cup_{\sigma}
  (D\times [1/2,1], \tau(k_+,n_+)),$$
where $\sigma$ is an automorphism of the $k_-+2n_-=k_++2n_+$
punctured disk $D\times\{1/2\}\cap \tau$. Since everything can be
done up to isotopy, we can think of $\sigma$ as an element of the
braid group $B_{k_-+2n_-}$. We call such a decomposition of a given
tangle a {\em Heegaard splitting} of the tangle.

\begin{lem}
\label{heeg}
Let $\tau$ be a $(\epsilon^-,\epsilon^+)$-tangle where the length of
$\epsilon^{\pm}$ equals to $k_{\pm}+1$. Then any Heegaard splitting of
$\tau$, induces a  presentation of $H$ with deficiency
$k=\frac{k_-+k_+}{2}$.
\end{lem}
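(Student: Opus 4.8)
The plan is to extract the presentation of $H = H_1^\chi(B_\tau;R)$ directly from the Heegaard splitting
$B_\tau = (D\times[0,1/2]\setminus\tau(k_-,n_-)) \cup_\sigma (D\times[1/2,1]\setminus\tau(k_+,n_+))$
by means of a Mayer--Vietoris argument with twisted coefficients. Write $B_\tau = B_- \cup B_+$ where $B_\pm$ is the exterior of the appropriate trivial tangle in $D\times[0,1/2]$ (resp.\ $D\times[1/2,1]$), and $B_- \cap B_+$ is a regular neighbourhood (in $D\times\{1/2\}$) of the punctured disk $D\times\{1/2\}\setminus\tau$, which is a disk with $k_-+2n_- = k_++2n_+$ punctures. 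The first step is to identify the twisted homology of each piece. A trivial tangle of type $(k,n)$ has exterior homotopy equivalent to a wedge of circles, so by Remark \ref{basis} (applied to the relevant punctured disk, together with the meridians of the $n$ trivial arcs) each $H_1^\chi(B_\pm;R)$ is a \emph{free} $R$-module; call its rank $r_\pm$. The punctured disk $D\times\{1/2\}\setminus\tau$ likewise has free first twisted homology of rank $k_-+2n_- = k_++2n_+$; denote this module $H_0$ and note $H_0 = H_-(D\times\{1/2\}) $ in the notation of the paper for the bottom piece, and equals the analogous module for the top piece as well.

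Next I would write down the Mayer--Vietoris sequence in twisted homology:
$$ H_1^\chi(B_-) \oplus H_1^\chi(B_+) \longrightarrow H_1^\chi(B_\tau) \longrightarrow \widetilde{H_0^\chi}(B_-\cap B_+) \longrightarrow H_0^\chi(B_-)\oplus H_0^\chi(B_+),$$
and one term to the left,
$$ H_1^\chi(B_-\cap B_+) \stackrel{(j_-,\,-j_+)}{\longrightarrow} H_1^\chi(B_-)\oplus H_1^\chi(B_+)\longrightarrow H_1^\chi(B_\tau)\longrightarrow 0,$$
the last arrow being onto because $\widetilde{H_0^\chi}(B_-\cap B_+)\to H_0^\chi(B_-)\oplus H_0^\chi(B_+)$ is injective (each punctured disk is connected with the induced character, so reduced twisted $H_0$ vanishes appropriately, or more carefully one checks the map is injective using that a generating meridian survives in each side). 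Here $j_\pm$ is induced by the inclusion of the central punctured disk into $B_\pm$, and up to the identification of $H_1^\chi(B_+)$ with a free module via the mapping-cylinder structure, $j_+$ is built from the (oriented) Burau action of $\sigma$, exactly as in Section \ref{braid}. This exhibits
$$ R^{\,k_-+2n_-} \xrightarrow{\ A\ } R^{\,r_-}\oplus R^{\,r_+} \longrightarrow H \longrightarrow 0, $$
a presentation with $p = k_-+2n_-$ relators and $p + (r_- + r_+ - p)$ generators, so its deficiency is $r_-+r_+-(k_-+2n_-)$.

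The remaining step is the bookkeeping that this deficiency equals $k=\frac{k_-+k_+}{2}$. For this I would compute $r_\pm$ from the homotopy type of the exterior of a trivial tangle of type $(k,n)$: its exterior deformation retracts onto a wedge of $(k + 2n) + (\text{something})$ circles, but in \emph{twisted} homology the rank of $H_1^\chi$ of a wedge of $c$ circles mapping to $\langle t\rangle$ is $c-1$ if the character is nontrivial and $c$ if trivial. Unwinding Remark \ref{basis}: for the bottom piece the punctured disk $D_-$ contributes rank $k_-$ (not $k_-+1$), and each trivial arc contributes one meridian, giving $r_- = k_- + n_-$; symmetrically $r_+ = k_+ + n_+$. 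Then
$$ r_- + r_+ - (k_- + 2n_-) = (k_-+n_-) + (k_++n_+) - (k_-+2n_-) = k_+ + n_+ - n_-,$$
and substituting $k_-+2n_- = k_++2n_+$, i.e. $n_- - n_+ = \frac{k_+-k_-}{2}$, gives $k_+ - \frac{k_+-k_-}{2} = \frac{k_-+k_+}{2} = k$, as desired. The main obstacle I anticipate is the careful verification of the twisted $H_0$ computations and the surjectivity claim (that is, that no lower-dimensional correction terms intervene), since twisted coefficients make the na\"ive Euler-characteristic count subtle when the character is trivial on some component; one must check that $\chi$ restricted to each piece and to the central punctured disk is nontrivial (which holds because every meridian maps to $t\neq 1$), so that the ``$c-1$'' formula applies uniformly. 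Once that is pinned down, the statement follows by assembling the Mayer--Vietoris presentation and performing the index arithmetic above.
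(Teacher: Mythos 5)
Your proposal is essentially the paper's own proof: it applies the Mayer--Vietoris sequence in twisted homology to the splitting $B_\tau = X_1 \cup_\sigma X_2$ along the central punctured disk $S$, uses freeness of $H_1^\chi(S)$, $H_1^\chi(X_1)$, $H_1^\chi(X_2)$ (ranks $k_-+2n_-$, $k_-+n_-$, $k_++n_+$), and performs the same rank arithmetic to get deficiency $k$. Your extra attention to right-exactness (the twisted $H_0$ terms) is a point the paper glosses over, but it is the same argument.
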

 
\begin{proof}
Let
$$(D\times I, \tau)=(D\times [0,1/2], \tau(k_-,n_-))\cup_{\sigma}
(D\times [1/2,1], \tau(k_+,n_+))$$
be a Heegaard splitting for $\tau$,
with $\sigma\in B_{k_-+2n_-}$. 
Set
$$X_1 = \left(D \times \left[0,\frac{1}{2}\right]\right) \setminus \tau(k_-,n_-),$$
$$X_2 = \left(D \times \left[\frac{1}{2},1\right]\right) \setminus \tau(k_+,n_+),$$
and
$$S=\left(D\times\left\{\frac{1}{2}\right\}\right) \setminus \tau(k_-,n_-)
=D \setminus \{p_0,\dots,p_{k_-+2n_{-}}\}.$$   
Thus, $B_\tau= (D \times I) \setminus \tau  =  X_1 \cup_{\sigma} X_2$. By
applying  the Mayer-Vietoris exact sequence to this decomposition we
obtain the exact sequence of reduced homology groups
\begin{eqnarray} \dots \longrightarrow H_1(S) \stackrel{i_{S_-} \oplus i_{S_+}} \longrightarrow H_1(X_1) \oplus H_1(X_2) \longrightarrow H_1(B_{\tau}) \longrightarrow 0, 
\end{eqnarray}
where $i_{S_-}$ is induced by the inclusion $S \hookrightarrow X_1$ 
while  $i_{S_+}$ is induced by $S\stackrel{\sigma}\longrightarrow\sigma(S) \hookrightarrow X_2$.
Let $\chi \colon \pi_1(B_{\tau}) \to \Zz$ be the epimorphism
defining $H=H_1^{\chi}(B_{\tau};R)$. This exact sequence lifts to the
following exact sequence of  homology  with coefficients in $R$.

\begin{eqnarray}
\label{pres}
\dots \longrightarrow H_1^{\chi}(S) \stackrel{\iota_{S_-} \oplus \iota_{S_+}} \longrightarrow H_1^{\chi}(X_1) \oplus H_1^{\chi}(X_2) \longrightarrow H \longrightarrow 0.
\end{eqnarray}
This is a presentation of $H$ as an $R$-module,
since
$H_1^{\chi}(S)$, $H_1^{\chi}(X_1)$ and $H_1^{\chi}(X_2)$ are free $R$-modules.

Since $X_1$, $X_2$ and $S$ have the homotopy type of a wedge of circles, the homology modules have respectively
 ranks $k_-+n_-$, $k_++n_+$ and $k_-+2n_-$. It follows that the deficiency of the presentation is $k$.
\end{proof}

By construction, $\rho_{\tau}$
is well-defined up multiplication by units of $R$.  This implies
that changing the Heegaard splitting of $\tau$  will change
$\rho_{\tau}$ up to multiplication by units of $R$. Nevertheless, we
 give a brief constructive proof of this fact, since it illustrates the similarities
  with the approach of Bigelow for the Jones polynomial \cite{big}. In order to do this we
need to introduce some new definitions.

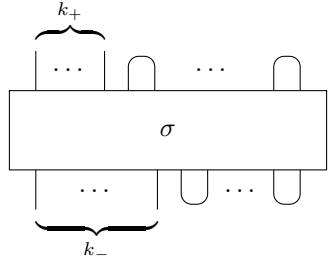
\begin{figure}[ht]
\begin{center}
\begin{tikzpicture}
\draw (-60pt,-15pt) rectangle (60pt,15pt);
\draw (0,0) node{$\sigma$};
\draw (-50pt,15pt) -- (-50pt,30pt);
\draw (-37pt,23pt) node{$\dots$};
\draw (-24pt,15pt) -- (-24pt,30pt);
\draw (-37pt,30pt) node[anchor=south]{$\overbrace{\hspace{26pt}}^{k_+}$};
\draw [rounded corners=4pt] (-15pt,15pt) -- (-15pt,28pt)--(-5pt,28pt)--(-5pt,15pt);
\draw (17pt,23pt) node{$\dots$};
\draw [rounded corners=4pt] (40pt,15pt) -- (40pt,28pt)--(50pt,28pt)--(50pt,15pt);
\draw (-50pt,-15pt) -- (-50pt,-30pt);
\draw (-27pt,-23pt) node{$\dots$};
\draw (-4pt,-15pt) -- (-4pt,-30pt);
\draw (-27pt,-30pt) node[anchor=north]{$\underbrace{\hspace{46pt}}_{k_-}$};
\draw [rounded corners=4pt] (5pt,-15pt) -- (5pt,-28pt)--(15pt,-28pt)--(15pt,-15pt);
\draw (28pt,-23pt) node{$\dots$};
\draw [rounded corners=4pt] (40pt,-15pt) -- (40pt,-28pt)--(50pt,-28pt)--(50pt,-15pt);
\end{tikzpicture}
\caption{The $(k_+,k_-)$-plat closure of $\sigma$.}
\label{pllat}
\end{center}
\end{figure}

Given an oriented braid $\sigma\in B_{r}$, 
and integers $k_+,k_- \leq r$ of the same parity as $r$,
the \emph{$(k_+,k_-)$-plat closure} of $\sigma$ is the tangle  obtained
from $\sigma$ as follows.
Place caps to connect adjacent pairs of endpoints at the top right,
so as to leave only the $k_+$ endpoints at the top left.
Similarly,
place cups to connect adjacent pairs of endpoints at the bottom right,
so as to leave only the $k_-$ endpoints at the bottom left (see Figure \ref{plat}).
Let $\hat{\sigma}(k_+,k_-)$
denote  the $(k_+,k_-)$-plat closure of $\sigma$. Clearly 
$$(D\times
I,\hat{\sigma}(k_+,k_-))=(D\times [0,1/2],
\tau(k_-,n_-) \cup_{\sigma} (D\times [1/2,1], \tau(k_+,n_+)).$$

In order to relate  two braids having the same  $(k_+,k_-)$-plat
closure we define particular subgroups of the braid groups: let
$\mathop{\mathrm{Hil}}_{k+2n}$
be the subgroup of $B_{k+2n}$ generated by the following braids
\begin{itemize}
 \item $\sigma_{k+1}$
\item $\sigma_{k+2}\sigma_{k+1}^2\sigma_{k+2}$
\item $\sigma_{k+2i}\sigma_{k+2i-1}\sigma_{k+2i+1}\sigma_{k+2i}$
for $i=1,\dots n-1$,
\end{itemize}
where $\sigma_1,\dots \sigma_{k+2n-1}$ are the standard generators of $B_{k+2n}$.
When $k=0$ the above subgroups are well known and are called Hilden braid groups on $2n$ strings  (see \cite{BC}).  

We have the following theorem that extend to our setting a result of Birman for classical plat closure of braids  (see  \cite{B2}) . 

\begin{thm}
\label{equivalence}
Two oriented braids have isotopic $(k_+,k_-)$-plat closures 
if and only if they are related by a finite sequence the following moves:  
\begin{itemize}
\item $\sigma\to \sigma k$, 
          where $\sigma\in B_{k_-+2n}$ and  $k\in\mathop{\mathrm{Hil}}_{k_-+2n}$
\item $\sigma\to h \sigma $,
         where $\sigma\in B_{k_++2n}$ and  $h\in\mathop{\mathrm{Hil}}_{k_++2n}$
\item  $\sigma\leftrightarrow \sigma \sigma_{s} $,
         where $\sigma\in B_s$ and  $\sigma_{s}\sigma \in B_{s+2}$.
\end{itemize}
\end{thm}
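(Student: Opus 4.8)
The plan is to adapt Birman's classical argument for plat closures of braids (the case $k_+=k_-=0$) to our asymmetric, oriented setting. The "if" direction is the routine half: one checks that each of the three listed moves preserves the isotopy class of the $(k_+,k_-)$-plat closure. For the first two moves, observe that a generator of $\mathop{\mathrm{Hil}}_{k+2n}$ is, by construction, a braid supported near the cups (resp.\ caps) that can be isotoped "through" them --- $\sigma_{k+1}$ slides a cup past itself, $\sigma_{k+2}\sigma_{k+1}^2\sigma_{k+2}$ is the band generator that exchanges two cup-connected points, and the four-letter words interchange adjacent cups --- so right-multiplication by $k\in\mathop{\mathrm{Hil}}_{k_-+2n}$ does not change $\widehat{\sigma}(k_+,k_-)$, and symmetrically for left-multiplication by $h\in\mathop{\mathrm{Hil}}_{k_++2n}$. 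For the third (stabilization) move, adding a strand that is immediately capped off at the top and cupped off at the bottom, joined by $\sigma_s$, is an isotopy of the plat closure since that extra strand bounds a trivial disk. Each of these is a picture-level verification.

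The substance is the "only if" direction. I would set up a Morse-theoretic / sweepout argument on $D\times I$: given two braids $\sigma,\sigma'$ with isotopic plat closures, fix an isotopy $\tau_s$, $s\in[0,1]$, from $\widehat{\sigma}(k_+,k_-)$ to $\widehat{\sigma'}(k_+,k_-)$, and for each $s$ choose a Heegaard disk $S_s=f_s^{-1}(1/2)$ cutting $\tau_s$ into the standard trivial tangles above and below. This realizes $\tau_s$ as the plat closure of a braid $\sigma_s\in B_{r_s}$. Generically, as $s$ varies, $\sigma_s$ changes continuously except at finitely many singular values of $s$, where one of three elementary events occurs: (i) the braid word is modified by an isotopy of $\sigma_s$ that pushes a cup up past the Heegaard level or a cap down past it --- these are exactly the $\mathop{\mathrm{Hil}}$-moves, since the subgroup $\mathop{\mathrm{Hil}}_{k+2n}$ is precisely the image in $B_{k+2n}$ of the mapping class group of the lower (resp.\ upper) trivial tangle complement fixing the $k$ vertical strands and the boundary; (ii) a maximum of $f_s$ on $\tau_s$ passes below a minimum, or a min passes above a max --- a birth/death of a cup-cap pair, which is the stabilization move $\sigma\leftrightarrow\sigma\sigma_s$; (iii) two critical values collide without creating a Heegaard-level crossing --- this is absorbed into an isotopy of the braid, hence a word in the braid relations, which is subsumed by conjugation within $\mathop{\mathrm{Hil}}$ together with the first two moves. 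Concatenating over the (finitely many) singular values expresses the passage from $\sigma$ to $\sigma'$ as a finite sequence of the listed moves.

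The main obstacle, and where I would spend most of the work, is the precise identification in event (i): one must show that \emph{every} self-homeomorphism of the punctured disk that extends over a standard trivial tangle of type $(k,n)$ (i.e.\ lies in the "Hilden-type" mapping class group relative to the $k$ vertical strands) is generated by the three explicit braids $\sigma_{k+1}$, $\sigma_{k+2}\sigma_{k+1}^2\sigma_{k+2}$, and $\sigma_{k+2i}\sigma_{k+2i-1}\sigma_{k+2i+1}\sigma_{k+2i}$. For $k=0$ this is the known generating set of the Hilden braid group \cite{BC,B2}; the general case requires checking that the presence of the $k$ frozen vertical strands on the left does not introduce new generators, which I would do by an induction on $n$ using the Birman exact sequence for adding one cup-pair, reducing to the $k=0$ statement plus the evident commutation of a new cup with the frozen strands. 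A secondary technical point is the bookkeeping of orientations: since the vertical strands and the cups/caps carry signs, one must track that the moves respect the sign sequences $\epsilon^\pm$, but this is automatic because the moves are geometric isotopies of an oriented tangle.
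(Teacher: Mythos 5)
The paper itself gives no argument here beyond the sentence that Birman's proofs of \cite[Theorems 1 and $1'$]{B2} extend, so the only thing to assess is whether your outline could actually supply those details, and it cannot close at exactly the point you flag as the main obstacle. Your step (i) asserts that $\mathop{\mathrm{Hil}}_{k+2n}$, as generated by $\sigma_{k+1}$, $\sigma_{k+2}\sigma_{k+1}^2\sigma_{k+2}$ and $\sigma_{k+2i}\sigma_{k+2i-1}\sigma_{k+2i+1}\sigma_{k+2i}$, is the full stabilizer of the trivial tangle $\tau(k,n)$; for $k\geq 1$ this is not so. Consider the pure braid $\delta=\sigma_k\sigma_{k+1}^2\sigma_k$, in which the two endpoints of the first cup travel together once around the $k$-th vertical strand. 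Geometrically $\delta$ lies in the stabilizer (slide the cup around the vertical strand and back; the band it sweeps, capped off by the cup and cap disks, is an embedded disk disjoint from the vertical strands), so it does not change the plat closure. But every word in the listed generators involves only $\sigma_{k+1},\dots,\sigma_{k+2n-1}$ and hence has no crossings with strands $1,\dots,k$, whereas $\delta$ has four such crossings. Worse for the theorem itself: take $k_\pm=k$, one cup and one cap, and compare $\sigma=\delta$ with $\sigma'=\mathrm{id}$ in $B_{k+2}$. Their plat closures are isotopic, yet the multiset of signed crossing numbers of strand $k$ with the remaining strands is preserved by multiplication on either side by words in $\sigma_j$, $j\geq k+1$, and by stabilizations $\sigma\leftrightarrow\sigma\sigma_s$ with $s>k$ (and the stabilization at $s=k$ is never applicable along such a sequence, since all braids reached fix the first $k$ positions); this multiset is $\{2,2\}$ for $\delta$ and empty for the identity, so the two braids are not related by the three listed moves.

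Consequently your proposed induction via the Birman exact sequence cannot establish the identification in (i): the kernel of forgetting a cup-pair contains precisely these point-pushes of a cup around the vertical strands, and they are not products of the three families, so the presence of the ``frozen'' strands does introduce new generators. Your sweepout argument is the right shape (it is essentially Birman's argument for $k=0$, which is all the paper invokes), but at event (i) it will produce cup-slides around vertical strands that are not among the allowed moves; to make the argument go through one must enlarge $\mathop{\mathrm{Hil}}_{k+2n}$ by such generators (e.g.\ $\sigma_k\sigma_{k+1}^2\sigma_k$ and its conjugates moving a cup around each vertical strand), in the spirit of the Hilden-type subgroups of \cite{BC}, and correspondingly adjust the statement being proved. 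The ``if'' direction, the stabilization analysis, and the orientation bookkeeping in your write-up are fine; the gap is concentrated in the generation claim.
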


\begin{proof}
The  proofs of \cite[Theorem 1 and $1'$]{B2} extend to our case. 
\end{proof}

Now we are ready to state our result.

 \begin{thm}
Let $\mathcal H$ and $\mathcal H'$ be  two  Heegaard splittings of the
same $(\epsilon^-,\epsilon^+)$ tangle $\tau$.
Specifically, suppose $\mathcal H$ and $\mathcal H'$ are respectively
$$(D\times I, \tau)=(D\times [0,1/2],\tau(k_-,n_-))  \cup_{\sigma} (D\times [1/2,1], \tau(k_+,n_+)),$$
$$(D\times I, \tau)=(D\times [0,1/2], \tau(k_-,m_-))  \cup_{\varsigma} (D\times [1/2,1], \tau(k_+,m_+)).$$
Moreover, let $\rho_{\tau}$ and $\rho'_{\tau}$  be the module homomorphisms computed  as defined in Section \ref{definition} using the presentations of $H$ associated to the two  Heegaard spittings (see Lemma \ref{heeg}). Then $\rho'_{\tau}=m_u\circ \rho_{\tau}$, where $m_u$ denotes the multiplication by a unit of $R$.
\end{thm}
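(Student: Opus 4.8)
The plan is to reduce the independence of $\rho_\tau$ from the Heegaard splitting to the combinatorial moves of Theorem \ref{equivalence}. First I would observe that any two Heegaard splittings of $\tau$ correspond to presentations of $\tau$ as plat closures $\hat\sigma(k_+,k_-)$ and $\hat\varsigma(k_+,k_-)$ of oriented braids $\sigma\in B_{k_-+2n}$ and $\varsigma\in B_{k_-+2m}$. By Theorem \ref{equivalence}, $\sigma$ and $\varsigma$ are connected by a finite sequence of the three moves (multiplication on the right by a Hilden generator, on the left by a Hilden generator, and stabilization $\sigma\leftrightarrow\sigma\sigma_s$). So it suffices to show that each of these three moves changes $\rho_\tau$ only by multiplication by a unit of $R$. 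This is where the explicit computations of Section \ref{tangle} enter: since a plat closure decomposes as cups, then a braid, then caps, the homomorphism $\rho_\tau$ is (up to the maps $\Ll^i i_\pm$, $d_r$, $\varphi_i$) the composite of the maps $\rho_{\text{cup}}$, $\rho_{\sigma}=\det(b(\sigma))\Ll^\bullet(b(\sigma)^{-1})$, and $\rho_{\text{cap}}$ already worked out there.

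Next I would treat the stabilization move, which should be the easiest: adding a trivial extra strand together with a $\sigma_s$ at the bottom (or top) amounts to composing $\tau$ with an elementary tangle that is a cup next to a cap, i.e. a "Reidemeister-I-like" local picture. Concretely one checks that the presentation of $H$ changes by adding one generator and one relator whose contribution to the relevant determinant is a single entry equal to a monomial $\pm t^{\pm 1}$ (coming from the Burau matrix of $\sigma_s$), so the Alexander function, hence $\rho_\tau$, changes by that unit. For the two Hilden moves I would argue that pre- or post-composing $\sigma$ by a generator $h$ of $\mathop{\mathrm{Hil}}$ does not change the tangle $\hat\sigma(k_+,k_-)$ at all — this is exactly the content of Theorem \ref{equivalence} — so the only thing to verify is that the two resulting presentations of the \emph{same} module $H$ yield Alexander functions differing by a unit. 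Here I would invoke the general fact recalled after (\ref{phi}): for fixed deficiency $k$, any two $k$-deficient presentations of $H$ give Alexander functions $\varphi$ that differ by a unit of $R$ (Lescop, \cite[\S 3.1]{le}). Since $\rho_\tau$ is built functorially from $\varphi_i$, the maps $\Ll^i i_-$, $(\Ll^{k-i} i_+)^*$ and $d_{i+\delta k}^{-1}$ — none of which depend on the chosen splitting, only on $\tau$ itself — it follows that $\rho'_\tau = m_u\circ\rho_\tau$.

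A cleaner way to organize the whole argument, which I would prefer, is to avoid case analysis on the Hilden generators by noting that the presentation (\ref{pres}) produced in Lemma \ref{heeg} is, up to change of basis over $R$ on the free modules $H_1^\chi(S)$, $H_1^\chi(X_1)$ and $H_1^\chi(X_2)$, determined by the braid $\sigma$ only through the Burau matrix $b(\sigma)$ acting on the homology of the $(k_-+2n)$-punctured disk. The Hilden moves act on $b(\sigma)$ by left/right multiplication by Burau matrices of the generators of $\mathop{\mathrm{Hil}}$, which act trivially on the sub- and quotient modules $H_1^\chi(X_1)$, $H_1^\chi(X_2)$ glued along the Mayer–Vietoris map; so the presentation matrix $A$ changes by invertible row/column operations over $R$ together with possibly a stabilization, each of which multiplies the Alexander function by a unit. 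Then $\rho_\tau$, being assembled from $\varphi_i$ and the splitting-independent maps $i_\pm$, $d_r$, changes accordingly.

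The main obstacle I expect is the bookkeeping in the stabilization and Hilden moves: one must track precisely how the free bases of $H_1^\chi(S)$, $H_1^\chi(X_1)$, $H_1^\chi(X_2)$ chosen in Lemma \ref{heeg} transform, verify that the "new" generator/relator pair contributes exactly an invertible $1\times 1$ block (so that no higher power of $t$ or non-unit scalar sneaks in), and confirm that the isomorphisms $\varphi_i$, $d_r$ are genuinely unchanged because they are defined from data of $\tau$ (the modules $H_\pm$, $H$ and the choice of generators $\hat\gamma$, $\hat\rho$, $w_+$) rather than from the splitting. Once these local verifications are in place, the global statement follows from Theorem \ref{equivalence} by induction on the length of the connecting sequence of moves, each step contributing a unit factor and units being closed under multiplication.
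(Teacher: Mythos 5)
Your overall skeleton matches the paper's: both arguments put $\tau$ in plat position, use Theorem \ref{equivalence} to connect the two splittings by a finite sequence of Hilden moves and stabilizations, and then check each move, with the stabilization handled by an explicit computation showing that the extra generator/relator pair contributes only a unit (the paper writes out the enlarged presentation matrix $P'$ and checks its Alexander function equals $\pm\det M$). The genuine difference is your treatment of the Hilden moves: the paper sets up the explicit block matrix $M$ built from the Burau matrix $b(\sigma)$ and the inclusion data $E^-_I$, $E^+_J$, and verifies (leaving details to the reader) how $\det M$ changes under left/right multiplication by the Hilden generators, whereas you invoke Lescop's general fact, recalled after (\ref{phi}), that two presentations of the same module with the same deficiency $k$ yield Alexander functions agreeing up to a unit. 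That step is legitimate, but observe that it proves the whole theorem at once: both Heegaard splittings give $k$-deficient presentations of the same module $H$ (Lemma \ref{heeg}), and the maps $i_\pm$ and $d_r$ depend only on $\tau$, so the statement follows immediately with no need for Theorem \ref{equivalence}, the plat machinery, or your stabilization computation --- this is exactly the non-constructive observation the paper itself makes in the paragraph preceding the theorem, while the purpose of the paper's proof is a constructive, matrix-level verification in the spirit of Bigelow's model. So your proof is correct but partially collapses into that shortcut; if you want the constructive version you must do the Hilden-move step by hand, and there your ``cleaner'' variant is too quick: the Burau matrices of the Hilden generators do not act trivially on $H_1^{\chi}(X_1)$ and $H_1^{\chi}(X_2)$, and one must actually check that they alter the presentation matrix by row and column operations changing the relevant determinants only by units, which is precisely the computation the paper leaves to the reader.
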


\begin{proof}

We keep the notations of the proof of Lemma \ref{heeg}.

\begin{figure}[ht]
\begin{center}
\begin{tikzpicture}
\draw (0,0) ellipse (140pt and 55pt); 
\filldraw
               (-110pt,0) circle (2pt) 
               (-90pt,0) circle (2pt)
               (-70pt,0) node {\dots}
               (-50pt,0) circle (2pt)
               (-30pt,0) circle (2pt)
               (-10pt,0) circle (2pt)
               (10pt,0) circle (2pt)
               (30pt,0) circle (2pt)
               (50pt,0) circle (2pt)
               (70pt,0) node {\dots}
               (90pt,0) circle (2pt)
               (110pt,0) circle (2pt);
\draw (-110pt,0) -- (-110pt,-60pt);
\draw (-90pt,0) -- (-90pt,-60pt);
\draw (-50pt,0) -- (-50pt,-60pt);
\draw (-30pt,0) -- (-30pt,-60pt);
\draw [rounded corners=8pt] (-10pt,0) -- (-10pt,-20pt)--(10pt,-20pt)--(10pt,0);
\draw [rounded corners=8pt] (30pt,0) -- (30pt,-20pt)--(50pt,-20pt)--(50pt,0);
\draw [rounded corners=8pt] (90pt,0) -- (90pt,-20pt)--(110pt,-20pt)--(110pt,0);
\draw (-100pt,0) ellipse (16pt and 7pt); \draw (-100pt,10pt) node[anchor=south] {$\gamma_1^-$};
\draw (-40pt,0) ellipse (16pt and 7pt); \draw (-40pt,10pt) node[anchor=south] {$\gamma_k^-$};
\draw (-20pt,0) ellipse (16pt and 7pt); \draw (-20pt,10pt) node[anchor=south] {$\beta_1^-$};
\draw (0pt,0) ellipse (16pt and 7pt); \draw (0pt,10pt) node[anchor=south] {$\alpha_1^-$};
\draw (20pt,0) ellipse (16pt and 7pt); \draw (20pt,10pt) node[anchor=south] {$\beta_2^-$};
\draw (40pt,0) ellipse (16pt and 7pt); \draw (40pt,10pt) node[anchor=south] {$\alpha_2^-$};
\draw (100pt,0) ellipse (16pt and 7pt); \draw (100pt,10pt) node[anchor=south] {$\alpha_{n_-}^-$};
\end{tikzpicture}
\caption{Curves on $S$, and the bottom half of the tangle.}
\label{figx_1}
\end{center}
\end{figure}
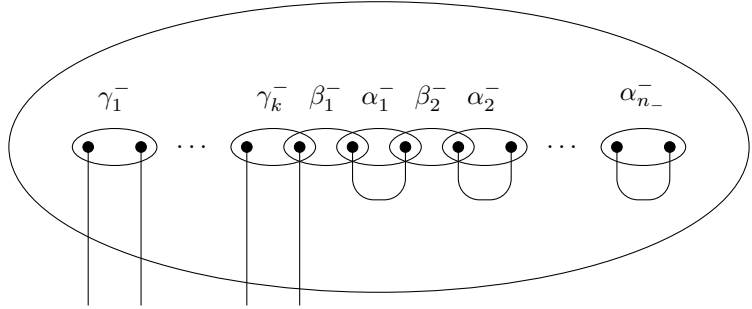

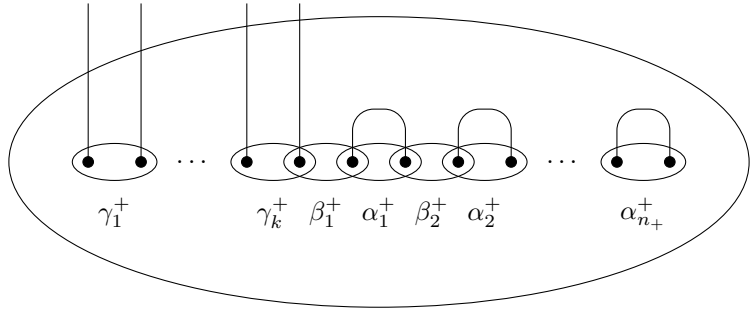
\begin{figure}[ht]
\begin{center}
\begin{tikzpicture}
\draw (0,0) ellipse (140pt and 55pt); 
\filldraw
               (-110pt,0) circle (2pt) 
               (-90pt,0) circle (2pt)
               (-70pt,0) node {\dots}
               (-50pt,0) circle (2pt)
               (-30pt,0) circle (2pt)
               (-10pt,0) circle (2pt)
               (10pt,0) circle (2pt)
               (30pt,0) circle (2pt)
               (50pt,0) circle (2pt)
               (70pt,0) node {\dots}
               (90pt,0) circle (2pt)
               (110pt,0) circle (2pt);
\draw (-110pt,0) -- (-110pt,60pt);
\draw (-90pt,0) -- (-90pt,60pt);
\draw (-50pt,0) -- (-50pt,60pt);
\draw (-30pt,0) -- (-30pt,60pt);
\draw [rounded corners=8pt] (-10pt,0) -- (-10pt,20pt)--(10pt,20pt)--(10pt,0);
\draw [rounded corners=8pt] (30pt,0) -- (30pt,20pt)--(50pt,20pt)--(50pt,0);
\draw [rounded corners=8pt] (90pt,0) -- (90pt,20pt)--(110pt,20pt)--(110pt,0);
\draw (-100pt,0) ellipse (16pt and 7pt); \draw (-100pt,-10pt) node[anchor=north] {$\gamma_1^+$};
\draw (-40pt,0) ellipse (16pt and 7pt); \draw (-40pt,-10pt) node[anchor=north] {$\gamma_k^+$};
\draw (-20pt,0) ellipse (16pt and 7pt); \draw (-20pt,-10pt) node[anchor=north] {$\beta_1^+$};
\draw (0pt,0) ellipse (16pt and 7pt); \draw (0pt,-10pt) node[anchor=north] {$\alpha_1^+$};
\draw (20pt,0) ellipse (16pt and 7pt); \draw (20pt,-10pt) node[anchor=north] {$\beta_2^+$};
\draw (40pt,0) ellipse (16pt and 7pt); \draw (40pt,-10pt) node[anchor=north] {$\alpha_2^+$};
\draw (100pt,0) ellipse (16pt and 7pt); \draw (100pt,-10pt) node[anchor=north] {$\alpha_{n_+}^+$};
\end{tikzpicture}
\caption{Curves on $S$, and the top half of the tangle.}
\label{figx_2}
\end{center}
\end{figure}

Let
$\gamma_1^-,\dots,\gamma_{k_-}^-,  
 \beta_1^-, \alpha_1^-, \dots, \beta_{n_-}^-, \alpha_{n_-}^-$
be the basis for $H_1^{\chi}(S)$
as described in Remark \ref{basis}.
Thus,
each of these is either a loop or figure eight around two adjacent punctures in $S$.
The $\gamma_i^-$ involve two punctures connected to strands that connect to the bottom of the tangle.
The $\alpha_i^-$ are loops around the two endpoints of a cup,
and become zero in the homology of the tangle complement since they could slide off the cup.
The $\beta_i^-$ involve punctures from adjacent cups,
or in the case of $\beta_1^-$,
a cup and a strand.
See Figure \ref{figx_1},
but there we assumed the punctures alternate in sign
so as to avoid having to draw any figure eight.

Now $\gamma_1^-,\dots,\gamma_{k_-}^-, \beta_1^-, \dots, \beta_{n_-}^-$
is a basis for the free $R$-module $H_1^{\chi}(X_1)$,
and $\iota_{S_-}$ is a projection with kernel generated by the $\alpha_i^-$.
Moreover,
$\gamma_1^-,\dots,\gamma_{k_-}^-$ is a basis for the free $R$-module $H_-=H_1^{\chi_-}(D_- ;R)$.

Let 
$\gamma_1^+,\dots,\gamma^+_{k_+},
 \beta_1^+, \dots, \beta_{n_+}^+,
  \alpha_1^+,\dots, \alpha_{n_+}^+$
be the lifts of the loops of  Figure \ref{figx_2}. 
Then $\gamma_1^+,\dots,\gamma^+_{k_+}, \beta_1^+, \dots, \beta_{n_+}^+$ is a
basis for the free $R$-module $H_1^{\chi}(X_2)$. 
Note that $k_-+2n_-=k_+ + 2n_+$.

Let  $(f_1,\dots, f_{k_+)}$ be a ordered basis of $H_+$ as a free $R$-module and let
$$\textstyle{\det}_+ \colon \Ll^{k_+}H_+\to R$$
be the corresponding determinant. For each sequence $I$:
$$1\leq i_1<\dots<i_n\leq k_+-1$$
we set  $\hat{f}_I=f_{i_1}\wedge\dots\wedge f_{i_n}$.
Moreover, we denote with  $\overline I$ the  sequence complementary to
$I$ with respect to $1<2<\dots<k_+$. If $u_-\in\Ll^i H_-$, then $\rho_{\tau}(u_-)=\sum_I
a_I\hat f_I\in \Ll^{i +  \delta k   } H_+ $, where
$$a_I=\textstyle{\det}_+^{-1}\left(\varphi(H,k)(i_-(u_-)\wedge
i_+(\hat{f}_{\overline I}))\right)$$
and $\varphi(H,k)$ is the
Alexander function associated to the presentation of $H$ induced by
$\mathcal H$ as in proof of Lemma \ref{heeg}.

Let $b(\sigma)$ denote the Burau matrix of $\sigma$,
using the above bases.
The presentation matrix of $H$ with respect
to the basis $( \gamma^-_i,\beta^-_j,\alpha^-_j)$ of $H_1^{\chi}(S)$
and  $(\gamma^-_i,\beta^-_j,\gamma^+_h,\beta^+_l)$ of 
$H_1^{\chi}(X_1) \oplus H_1^{\chi}(X_2)$ is
$$
\begin{pmatrix}
    I_{k_-} & 0  &  0 \\
    0 & I_{n_-} & 0 \\
    b(\sigma)(\gamma^-)_{|\gamma^+}
       & b(\sigma)(\beta^-)_{|\gamma^+}
          & b(\sigma)(\alpha^-)_{|\gamma^+} \\
    b(\sigma)(\gamma^-)_{|\beta^+}
       & b(\sigma)(\beta^-)_{|\beta^+}
          & b(\sigma)(\alpha^-)_{|\beta^+}
\end{pmatrix},
$$ 
where the terms of the matrix are blocks.

As a basis of $H_-$ (respectively $H_+$) we can choose the lifts of the loops
$(\gamma_1^-,\dots,\gamma_{k_+}^-)$ (respectively
$(\gamma_1^+,\dots,\gamma_{k_+}^+)$) depicted in Figure \ref{figx_1} and \ref{figx_2}, so
we just have to check how the matrix used to compute
$\varphi(H,k)(i_-(\hat{\gamma}^-_I)\wedge i_+(\hat{\gamma}^+_J))$
changes under the moves of Theorem \ref{equivalence}.
Following the definition of Alexander function and the computation of Lemma \ref{heeg}, we have that
$\varphi(H,k)(i_-(\hat{\gamma}^-_I)\wedge i_+(\hat{\gamma}^+_J))$
is the determinant of the following matrix
\[
M = \begin{pmatrix}
I_{k_-} & 0  &  0 & E^-_I & 0\\
0 & I_{n_-} & 0 & 0 & 0\\
b(\sigma)(\gamma^-)_{|\gamma^+}  & b(\sigma)(\beta^-)_{|\gamma^+} & b(\sigma)(\alpha^-)_{|\gamma^+} &0 & E^+_J\\
b(\sigma)(\gamma^-)_{|\beta^+} & b(\sigma)(\beta^-)_{|\beta^+} & b(\sigma)(\alpha^-)_{|\beta^+} &0 &0 \\
\end{pmatrix},
\] 
where 
\[ (E_I^-)_{hk}=\left\{\begin{array}{l}
                 1\qquad \text{if}\  k=i_h\\
0 \qquad \text{otherwise} 
                \end{array}\right. \qquad \text{and}\qquad (E_J^+)_{hk}=\left\{\begin{array}{l}
                 1\qquad \text{if}\  k=j_h\\
0 \qquad \text{otherwise.} 
                \end{array}\right.
\] 
 
So we want to check how the determinant of the matrix $M$ changes under the moves of Theorem \ref{equivalence}.
 
The last move changes  the braid index by two. Let $N=k_-+2n_-$ and
suppose that $\sigma\in B_{N}$. Then $\varsigma=\sigma\sigma_N\in
B_{N+2}$. Moreover, by the functoriality of the  Burau representation,
$b(\sigma\sigma_N)=b(\sigma)b(\sigma_N)$.
An explicit computation gives the presentation matrix  of $H$,
relatively to the Heegaard decomposition
$\mathcal H'$:
$$
P'=\begin{pmatrix}
        I_{k_-} & 0  &  0 & 0 & 0  \\
        0 & I_{n_-}&0  &0 & 0 \\
0 & 0 & 1&0 & 0 \\       b(\sigma)(\gamma^-)_{|\gamma^+}  & b(\sigma)(\beta^-)_{|\gamma^+} & 0 & b(\sigma)(\alpha^-)_{|\gamma^+} &   \\
        b(\sigma)(\gamma^-)_{|\beta^+} & b(\sigma)(\beta^-)_{|\beta^+} &0 & b(\sigma)(\alpha^-)_{|\beta^+} & 0 \\  0 & 0 &   -t^{\epsilon+1} &0\dots 0\ t^{\epsilon+1} &1 \\
\end{pmatrix},
$$ 
with respect to the bases
$$(\gamma^-_i,\beta^-_j,\beta^-_{n_-+1},\alpha^-_j,\alpha_{n_-+1}),$$
$$(\gamma^-_i,\beta^-_j,\beta^-_{n_-+1},\gamma^+_h,\beta^+_l,\beta^+_{n_++1}).$$
It is straightforward to check that the Alexander function associated
to this presentation matrix is $\pm \det M$.

The first two moves of Theorem \ref{equivalence} do not change the
braid index of $\sigma$ and so the size of $M$. So it is enough to
check how the matrix $M$ changes by composing
$\sigma$  (on the left or on the right) with a generator of
$\mathop{\mathrm{Hil}}_{k+2n}$. We leave the detail of the computations to the reader.
\end{proof} 

\section{Functoriality}

In this section,
we prove that the Alexander invariant defined in Section \ref{definition} and  computed on  cups, caps and braids,
in Section \ref{tangle}, is a functorial invariant of the tangle category.

\begin{lem}
Let $\tau$ be a tangle in plat position.
The Alexander invariant of $\tau$ from Section \ref{plat}
is the same as
the Alexander invariant computed as 
a product of cups, a braid and caps from Section \ref{tangle}.
\end{lem}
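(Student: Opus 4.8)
The plan is a direct computation: evaluate both invariants in the same bases and check that they produce the same matrix (up to sign, which is harmless since $\rho_\tau$ is defined only up to a unit). The starting point is that, for a tangle $\tau=\hat\sigma(k_+,k_-)$ in plat position, the Heegaard splitting used in Section \ref{plat} is literally the decomposition of $\tau$ as a stack --- a bottom trivial tangle $\tau(k_-,n_-)$ (a product of cups and identity strands), the braid $\sigma\in B_{k_-+2n_-}$, and a top trivial tangle $\tau(k_+,n_+)$ (a product of identity strands and caps). Thus $X_1$ is the complement of the bottom part, $X_2$ of the top part, $S=X_1\cap X_2$ is the punctured middle disk, and $b(\sigma)$ occurs in both computations; the difference is only that Section \ref{plat} uses the global Mayer--Vietoris presentation (\ref{pres}) of $H=H_1^\chi(B_\tau;R)$, whereas the functorial recipe computes $\rho^{\mathrm{comp}}:=\rho_{\mathrm{caps}}\circ\rho_\sigma\circ\rho_{\mathrm{cups}}$ from the local formulas of Section \ref{tangle}.

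First I would make $\rho^{\mathrm{comp}}$ explicit in the bases of Figures \ref{figx_1} and \ref{figx_2}. Iterating the single-cup formula $\rho_\tau(u_-)=(-1)^{k-i}\,i_+^{-1}i_-(u_-)\wedge\alpha$ (identity strands contribute identity maps, and the ambiguity in $i_+^{-1}$ is killed once one wedges with the cup loop) shows that the invariant of the bottom trivial tangle sends $\hat\gamma_I^-$ to $\pm\,\hat\gamma_I^-\wedge\alpha_1^-\wedge\cdots\wedge\alpha_{n_-}^-$ in $\Ll^{\bullet}H_1^\chi(S;R)$, where $H_-$ is identified with the span of $\gamma_1^-,\dots,\gamma_{k_-}^-$. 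Dually, iterating the single-cap formula shows that the invariant of the top trivial tangle annihilates every class divisible by some $\alpha_j^+$ and sends $v\wedge\beta_1^+\wedge\cdots\wedge\beta_{n_+}^+$, with $v$ a wedge of $\gamma_i^+$'s, to $\pm v$. By Section \ref{braid}, $\rho_\sigma$ acts on the exterior powers of $H_1^\chi(S;R)$ through the oriented Burau matrix $b(\sigma)$ in these bases. Composing, one reads off that the coefficient of $\hat\gamma_{I'}^+$ in $\rho^{\mathrm{comp}}(\hat\gamma_I^-)$ is, up to sign, the minor of $b(\sigma)$ on the rows indexed by $I'$ among the $\gamma^+$'s together with all the $\beta^+$ rows, and the columns indexed by $I$ among the $\gamma^-$'s together with all the $\alpha^-$ columns.

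Then I would compare this with the Section \ref{plat} side. By the proof of Theorem \ref{equivalence}, the coefficient of $\hat\gamma_{I'}^+$ in $\rho_\tau(\hat\gamma_I^-)$ is $\pm\det M$, with $M$ the block matrix displayed there, where the extra columns $E_I^-$ are the indicators of the $\gamma^-$ rows indexed by $I$ and $E_J^+$ the indicators of the $\gamma^+$ rows indexed by $J=\overline{I'}$. Expanding $\det M$ along the rows of the identity blocks $I_{k_-}$ and $I_{n_-}$, a surviving term forces, for each $h\in I$, the corresponding $E_I^-$ column to be used (any unused $E_I^-$ column becomes a zero column on the remaining $\gamma^+$ and $\beta^+$ rows), while the $\beta^-$ rows consume the $\beta^-$ columns; expanding once more along the $E_J^+$ columns, which pin down the $\gamma^+$ rows indexed by $J$, what remains is exactly the minor of $b(\sigma)$ found in the previous paragraph. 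Hence $\rho_\tau$ and $\rho^{\mathrm{comp}}$ have the same matrix up to sign, proving the lemma. One may phrase this last step as Gaussian elimination on (\ref{pres}): since $\iota_{S_-}$ is the projection with kernel spanned by the $\alpha^-$'s and $\iota_{S_+}$ is $b(\sigma)$ followed by the projection with kernel spanned by the $\alpha^+$'s, clearing the identity blocks of the presentation matrix reduces the Alexander function to a single minor of $b(\sigma)$.

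The main obstacle is bookkeeping, in two places. First one must check that the iterated compositions of cup-maps and of cap-maps --- each living between different intermediate modules $H$ and involving a choice of preimage under $i_+$ (resp.\ $i_-$) --- genuinely collapse to the clean descriptions above, i.e.\ that the various preimage ambiguities cancel against the wedge factors $\alpha_j^\pm$ (and that the basis of $H_\pm$ from Remark \ref{basis} matches, under the plat construction, the $\gamma^\pm$ loops on $S$). Second, one must track which rows and columns survive in the expansion of $\det M$ so that the resulting minor of $b(\sigma)$ matches the one coming from $\rho^{\mathrm{comp}}$ on the nose. The sign and $t^{\pm1}$ ambiguities play no role, since $\rho_\tau$ is defined only up to a unit, so these two combinatorial matchings are the entire content; it is worth running them first on the simplest case, for instance a single cup beneath a single cap, where $\tau$ is a $(1,1)$-tangle.
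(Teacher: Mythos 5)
Your proposal is correct and follows essentially the same route as the paper's own proof: both sides are evaluated in the $\gamma^\pm,\beta^\pm,\alpha^\pm$ bases, the composite cup--braid--cap map is identified with extraction of the coefficient of $\hat\gamma^+_{\overline J}\wedge\beta^+$ in $b(\sigma)(\hat\gamma^-_I\wedge\alpha^-)$, and $\det M$ is reduced to the same minor of $b(\sigma)$ (the paper does this by column operations and block reduction rather than your Laplace expansion, a purely cosmetic difference, and the matrix $M$ actually appears in the proof of the Heegaard-splitting invariance theorem rather than Theorem \ref{equivalence}). The bookkeeping points you flag (iterating the cup/cap formulas and matching signs up to a unit) are treated at the same level of brevity in the paper itself.
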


\begin{proof}
Fix a tangle $\tau$,
which consists of a collection of cups at the bottom,
a collection of caps at the top,
and a braid $\sigma$ in the middle.

Let $\gamma^-_i$ be the generators of the homology of the bottom disk of $\tau$,
as shown in Figure \ref{figx_1}.
Fix $\gamma^-_I$, a wedge product of a sequence of these basis vectors.

Let $\gamma^+_j$ be the generators of the homology of the top disk of $\tau$,
as shown in Figure \ref{figx_2}.
Fix $\gamma^-_{\overline J}$,
a wedge product of a sequence of these basis vectors,
where $\overline J$ is the complement of $J$.
We will compute
the coefficient of $\gamma^-_{\overline J}$ in $\rho(\tau)(\gamma^-_I)$.

Let $\alpha^-_i$ be the loops around the cups,
as shown in Figure \ref{figx_1}.
Let $\alpha^-$ be the wedge product of all of these elements in order.
If we apply just the cups from $\tau$ to $\gamma^-_I$,
we obtain $\gamma^-_I \wedge \alpha^-$.
Now apply the braid $\sigma$,
to obtain
$b(\sigma)((\gamma^-_I) \wedge (\alpha^-))$.

Let $\beta^+_i$ be the loops that go between adjacent caps of $\tau$,
as shown in Figure \ref{figx_2}.
Let $\beta^+$ be the wedge product of all of these elements in order.
The coefficient of $\gamma^-_{\overline J}$ in $\rho(\tau)(\gamma^-_I)$
is the coefficient of $\gamma^-_{\overline J} \wedge \beta^+$
in $b(\sigma)(\gamma^-_I) \wedge b(\sigma)(\alpha^-)$.

Now we recompute this same coefficient. Using notation from Section \ref{plat}, 
the coefficient of $\gamma^-_{\overline J}$ in $\rho(\tau)(\gamma^-_I)$
is the determinant of:
\[
\begin{pmatrix}
E^+_{\overline J} & E^+_J
\end{pmatrix}
\] 
times the determinant of:
\[
\begin{pmatrix}
I_{k_-} & 0  &  0 & E^-_I & 0\\
0 & I_{n_-} & 0 & 0 & 0\\
b(\sigma)(\gamma^-)_{|\gamma^+} & b(\sigma)(\beta^-)_{|\gamma^+} & b(\sigma)(\alpha^-)_{|\gamma^+} &0 & E^+_J\\
b(\sigma)(\gamma^-)_{|\beta^+} & b(\sigma)(\beta^-)_{|\beta^+} & b(\sigma)(\alpha^-)_{|\beta^+} &0 &0 \\
\end{pmatrix}.
\] 

The first of these determinants is $\pm 1$,
which we will deal with later.
The second matrix will be simplified using column operations
that do not change the determinant.
Specifically,
use the left column of blocks
to cancel out the $E_I^-$ in the fourth column of blocks.
This gives the following.
\[
\begin{pmatrix}
I_{k_-} & 0  &  0 & 0 & 0\\
0 & I_{n_-} & 0 & 0 & 0\\
b(\sigma)(\gamma^-)_{|\gamma^+} & b(\sigma)(\beta^-)_{|\gamma^+} & b(\sigma)(\alpha^-)_{|\gamma^+}
& -b(\sigma)(\gamma^-_I)_{|\gamma^+} & E^+_J\\
b(\sigma)(\gamma^-)_{|\beta^+} & b(\sigma)(\beta^-)_{|\beta^+} & b(\sigma)(\alpha^-)_{|\beta^+}
& -b(\sigma)(\gamma^-_I)_{|\beta^+} &0 \\
\end{pmatrix}.
\] 

This has the same determinant as the smaller matrix from its bottom right.
\[
\begin{pmatrix}
b(\sigma)(\alpha^-)_{|\gamma^+} & -b(\sigma)(\gamma^-_I)_{|\gamma^+} & E^+_J\\
b(\sigma)(\alpha^-)_{|\beta^+} & -b(\sigma)(\gamma^-_I)_{|\beta^+} &0 \\
\end{pmatrix}.
\] 
Up to sign,
this is the same as the determinant of:
\[
\begin{pmatrix}
b(\sigma)(\gamma^-_I)_{|\gamma^+} & E^+_J & b(\sigma)(\alpha^-)_{|\gamma^+} \\
b(\sigma)(\gamma^-_I)_{|\beta^+} & 0 & b(\sigma)(\alpha^-)_{|\beta^+} \\
\end{pmatrix},
\] 
Here we switched two block columns and changed the sign of one.
This may change the sign of the determinant,
depending on the parity of the number of basis vectors in $\gamma^-_I$.
We can ignore this,
since we are free to consistently change the sign of every entry of the matrix
$\rho(\tau)$.

Finally,
observe that the left and right columns of the above matrix
represent the terms of
$b(\sigma)(\gamma^-_I) \wedge b(\sigma)(\alpha^-)$
that are wedge products of vectors from $\gamma^+$ and $\beta^+$.
The determinant picks out the terms
that involve precisely the vectors
$\gamma^+_{\overline J}$ and $\beta^+$.
Such terms are, up to sign,
$\gamma^+_{\overline J} \wedge \beta^+$.
The sign correction is the determinant of
\[
\begin{pmatrix}
E^+_{\overline J} & E^+_J & 0 \\
0 & 0 & I \\
\end{pmatrix}
\] 
We conclude that
this direct calculation using the methods of Section \ref{plat}
gave the same answer as composing the operations of
cup, braid, and cap.
\end{proof}

\begin{thm}
The Alexander invariant 
as computed  in Section \ref{tangle} on cups braid and caps
determines a functorial invariant of the tangle category.
\end{thm}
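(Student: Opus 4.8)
The plan is to reduce the statement to a compatibility check against a finite presentation of the oriented tangle category $\mathcal{T}$. Recall that $\mathcal{T}$ is generated, under stacking, by the braid generators together with the elementary cup and cap of Section \ref{tangle}, subject to: the braid relations; the far--commutativity of disjoint cups and caps; the \emph{slide} relations moving a braid generator past a cup or a cap; and the \emph{zig--zag} (oriented Reidemeister II) relation in which an elementary cup is immediately capped off on the same pair of strands. Once $\rho$ is shown to be well defined on morphisms --- i.e. independent of the chosen decomposition of a tangle into generators --- functoriality is automatic: $\rho(\mathrm{id})=\mathrm{id}$ was verified in Section \ref{braid}, and $\rho(\tau'\circ\tau)=\rho(\tau')\circ\rho(\tau)$ holds because a decomposition of $\tau'\circ\tau$ is obtained by concatenating decompositions of $\tau$ and $\tau'$ and composition of the generator maps is associative. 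So the theorem amounts to verifying that the maps of Section \ref{tangle} respect each of the above relations, up to multiplication by a unit of $R$.

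For the braid relations there is nothing to do: the matrices $M_{b(\sigma_i)}$ written in Section \ref{braid} are the images of the Artin generators under the oriented Burau representation, which is a homomorphism $B_{k+1}\to GL_k(R)$, and $\rho$ on a braid $\sigma$ acts on $\Ll^i H$ as $\det(b(\sigma))\,\Ll^i(b(\sigma)^{-1})$, an honest functor of $\sigma$. The far--commutativity and slide relations I expect to follow by direct substitution from the three formulas of Section \ref{tangle}: $\det(b(\sigma))\,\Ll^i(b(\sigma)^{-1})$ for braids, $\rho_{\mathrm{cup}}(u)=(-1)^{k-i}\,i_+^{-1}i_-(u)\wedge\alpha$ for a cup with $u\in\Ll^i H_-$, and the corresponding contraction formula for a cap; in each case the ambient isotopy merely relabels the bases of $H_\pm$ and $H$ and adjusts the Burau matrix accordingly. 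I will carry these out as routine computations.

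The substantive point --- and the main obstacle --- is the zig--zag relation. Here one composes $\rho_{\mathrm{cup}}$ with $\rho_{\mathrm{cap}}$ and must see that the ``$\wedge\,\alpha$'' in the cup formula is cancelled by the contraction ``along $\beta$'' in the cap formula, leaving the identity on each $\Ll^i H_-$ up to a unit. This is plausible geometrically: after performing the cup, the kernel generator $\alpha\in H_+$ of $i_+$ and the complement generator $\beta$ used for the cap are represented by parallel loops encircling the same pair of punctures, so contracting along $\beta$ undoes wedging with $\alpha$, while the degree shifts $\delta k=+1$ (cup) and $\delta k=-1$ (cap) cancel. The delicate part is the bookkeeping of the signs $(-1)^{k-i}$ and of the choices of preimages under the non--injective maps $i_\pm$, together with the fact that the answer is only well defined up to a global unit --- which is precisely why the target is $\mathcal{M}$ rather than a category of $R$--modules with honest homomorphisms.

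Alternatively, and more in the spirit of the preceding sections, one can bypass the sign chase: by the Lemma above, $\rho$ computed from any plat presentation of a tangle coincides, up to a unit, with the intrinsic Alexander invariant of Section \ref{definition}, which by that construction and by the Heegaard--splitting--independence theorem of Section \ref{plat} is a genuine isotopy invariant of $(D\times I,\tau)$. Since the moves of Theorem \ref{equivalence} generate exactly the indeterminacy of a plat presentation, and any stacked composition of generators can be brought to plat position using those moves together with braid relations and the zig--zag relation, it suffices to re--prove invariance of $\rho$ under the listed moves and then to factor the Section \ref{plat} determinant for a stacked plat $\tau'\circ\tau$ as the product of the two determinants for $\tau$ and $\tau'$; this block--matrix factorization --- a variant of the computation in the proof of the Lemma above --- is then the one remaining obstacle.
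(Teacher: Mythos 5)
Your overall strategy --- reduce the theorem to checking the defining relations of a presentation of the oriented tangle category, with functoriality under stacking then automatic --- is the same skeleton the paper uses. But as written your proposal has a genuine gap: in your first route the crucial verifications are not actually performed. You explicitly defer the slide relations as ``routine computations'' and flag the zig--zag relation as ``the main obstacle,'' offering only a plausibility argument about $\alpha$ and $\beta$ cancelling; that cancellation, with its sign and preimage bookkeeping, is precisely the content that would need to be proved, so the argument is not complete. (Your description of the zig--zag move as a cup ``capped off on the same pair of strands'' is also not the right relation --- that word creates a split unknotted circle rather than the identity; the zig--zag offsets the cup and cap by one strand.)

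The paper avoids all of this sign--chasing by a different division of labor, which your second route comes close to but does not quite reach. The preceding Lemma shows that for any word in plat order (cups, then a braid, then caps) the composite of the Section~\ref{tangle} maps coincides with the intrinsic plat/Heegaard--splitting invariant of Section~\ref{plat}; the theorem of Section~\ref{plat}, via the moves of Theorem~\ref{equivalence}, shows that invariant is independent of the plat presentation up to a unit. Hence \emph{every} relation equating two plat presentations of the same tangle --- in particular the zig--zag and slide relations you were stuck on --- is automatically respected, and only some simple commutativity relations remain to be checked to exhaust a defining set of relations (Turaev's presentation). Also, the extra ``remaining obstacle'' you set yourself in route two --- factoring the Section~\ref{plat} determinant of a stacked plat $\tau'\circ\tau$ as a product --- is not needed: the theorem concerns the functor \emph{defined} by composing the generator maps, so compatibility with composition is automatic once well--definedness on relations is established; multiplicativity of the intrinsic determinant formula is a consequence, not a prerequisite.
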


\begin{proof}
A presentation of the tangle category
by generators and relations
can be found in \cite{t}.
Other presentations appear in the literature,
sometimes with minor differences.
Certainly cups, caps, and generators of the braid groups
are enough to generate the tangle category.
By the previous lemma,
we have every relation that equates different plat presentations for the same tangle.
These,
together with some simple commutativity relations,
are enough to give defining relations for the tangle category.
\end{proof}


\newcommand{\etalchar}[1]{$^{#1}$}
\providecommand{\bysame}{\leavevmode\hbox to3em{\hrulefill}\thinspace}
\providecommand{\MR}{\relax\ifhmode\unskip\space\fi MR }
\providecommand{\MRhref}[2]{%
  \href{http://www.ams.org/mathscinet-getitem?mr=#1}{#2}
}
\providecommand{\href}[2]{#2}


\begin{thebibliography}{FYH{\etalchar{+}}85}

\bibitem[A]{alex}
Alexander J.W., 
\emph{Topological invariants of knots and links},
 Trans. Amer. Math. Soc. 30 (1928), no. 2, 275-306.

\bibitem[BC]{BC} 
Bellingeri P.,Cattabriga A.,
\emph{Hilden braid groups}, preprint (2009), arXiv:0909.4845.

\bibitem[Bi]{big}
Bigelow S.,
\emph{A homological definition of the Jones polynomial},
Geom. Topol. Monogr., 4, Geom. Topol. Publ., Coventry, 2002.

\bibitem[B1]{birman} 
Birman J. S.,
\emph{Braids, links, and mapping class groups}, Princeton University
Press, Princeton, N.J. (1974), Annals of Mathematics Studies, No. 82.

\bibitem[B2]{B2} 
Birman J. S.,
\emph{On the stable equivalence of plat representations of knots and links}, Can. J. Math. 28 (1976), no. 2, 264-290.
\bibitem[B]{burau}
Burau W.,
\emph{Uber Zopfgruppen und gleischsinning verdrillte Verkettungen}, Abh.
Math. Sem. Ham. II (1936), 171-178.

\bibitem[B2]{platt}
Birman J.S., 
\emph{On the stable equivalence of plat representations of knots and links}, 
Canad. J. Math. 28 (1976) 264-290




\bibitem[CT1]{cimtur1}
Cimasoni D., Turaev V.,
\emph{A Lagrangian representation of tangles.}, Topology 44 (2005), no. 4, 747-767.

\bibitem[CT2]{cimtur2}
Cimasoni, D.; Turaev, V.
\emph{A Lagrangian representation of tangles. II.}, Fund. Math. 190 (2006), 11-27. 


\bibitem[FM]{FM}
 Florens V., Massuyeau G. \emph{Alexander functor of 3-manifolds}, preprint.

\bibitem[KLW]{klw}
Kirk P., Livingston C., Want Z.,
\emph{The Gassner representation for string links},
Commun. Contemp. Math. 3 (2001), no.~1, 87-136.


\bibitem[La]{law}
Lawrence, R. J.
\emph{A functorial approach to the one-variable Jones polynomial},
 J. Differential Geom. 37 (1993), no. 3, 689-710.

\bibitem[Le]{le}
Lescop, C. \emph{A sum formula for the Casson-Walker invariant}, Invent. Math. 133 (1998), no. 3,  613-681.

\bibitem[Ld]{ledimet}
Le Dimet J.Y.,
\emph{Enlacements d'intervalles et torsion de Whitehead},
Bull. Soc. Math. France 129 (2001), no.~2, 215-235.

\bibitem[T]{t}
Turaev, V. G.
\emph{Quantum invariants of knots and 3-manifolds},
 de Gruyter Studies in Mathematics, 18. Walter de Gruyter and Co., Berlin, 1994. 


\end{thebibliography}
\end{document}